\newtheorem{theorem}{Theorem}[section]
\theoremstyle{plain}
\newtheorem{corollary}[theorem]{Corollary}
\newtheorem{definition}{Definition}[section]
\newtheorem{example}{Example}[section]
\newtheorem{proposition}{Proposition}[section]
\numberwithin{equation}{section}
\begin{document}
\title[Convex G-metric spaces]{Convexity in G-metric spaces and
approximation of fixed points by Mann iterative process}
\author{Isa Yildirim}
\address{Department of Mathematics, Faculty of Science, Ataturk University,
Erzurum, 25240, Turkey.}
\email{isayildirim@atauni.edu.tr}
\author{Safeer Hussain Khan}
\address{Department of Mathematics,\ Statistics and Physics, Qatar
University, Doha 2713, Qatar.}
\email{safeer@qu.edu.qa}
\subjclass[2000]{Primary 47H10 ; Secondary 65J15}
\keywords{Convex structure, Convex G-metric space, Mann iterative process,
convergence}

\begin{abstract}
In this paper, we first define the concept of convexity in $G$-metric
spaces. We then use Mann iterative process in this newly defined convex $G$%
-metric space to prove some convergence results for some classes of
mappings. In this way, we can extend several existence results to those
approximating fixed points. Our results are just new in the setting.
\end{abstract}

\maketitle

\section{\textbf{Introduction and preliminaries}}

The study of metric fixed point theory has been researched extensively in
the past two decades or so because fixed point theory plays a key role in
mathematics and applied sciences. For example, in the areas such as
optimization, mathematical models, and economic theories.

In 2005, Mustafa and Sims introduced a new class of generalized metric
spaces called $G$-metric spaces (see \cite{z12}, \cite{z13}) as a
generalization of metric spaces $(X,d).$ This was done to introduce and
develop a new fixed point theory for a variety of mappings in this new
setting. This helped to extend some known metric space results to this more
general setting. The $G$-metric space is defined as follows:

\begin{definition}
\cite{z13} Let $X$ be a nonempty set and let $G:X\times X\times X\rightarrow 
\mathbb{R}
^{+}$ be a function satisfying the following properties:

(i) $G(x,y,z)=0$ if $x=y=z$

(ii) $0<G(x,x,y)$ for all $x,y\in X,$ with $x\neq y$

(iii) $G(x,x,y)\leq G(x,y,z)$ for all $x,y,z\in X,$ with $z\neq y$

(iv) $G(x,y,z)=G(x,z,y)=G(y,z,x)=...,$ (symmetry in all three variables); and

(v) $G(x,y,z)\leq G(x,a,a)+G(a,y,z)$ for all $x,y,z,a\in X$ (rectangle
inequality )$.$

Then the function $G$ is called a generalized metric or more specifically, a 
$G$-metric on $X$, and the pair $(X,G)$ is called a $G$-metric space.
\end{definition}

\begin{example}
\cite{z12} Let $X=%
\mathbb{R}
\backslash \{0\}.$ Define $G:X\times X\times X\rightarrow 
\mathbb{R}
^{+}$ by%
\begin{equation*}
G(x,y,z)=\left\{ 
\begin{array}{c}
|x-y|+|y-z|+|x-z|;\text{ if }x,y,z\text{ all have the same sign} \\ 
1+|x-y|+|y-z|+|x-z|;\text{ otherwise}%
\end{array}%
\right.
\end{equation*}%
Then $(X,G)$ is a $G$-metric space.
\end{example}

\begin{proposition}
\cite{z13} Let $(X,G)$ be a $G$-metric space. Then for any $x,y,z,$ and $%
a\in X,$ it follows that

(i) if $G(x,y,z)=0,$ then $x=y=z,$

(ii) $G(x,y,z)\leq G\left( x,x,y\right) +G\left( x,x,z\right) $ $,$

(iii) $G\left( x,y,y\right) \leq 2G\left( y,x,x\right) ,$

(iv) $G\left( x,y,z\right) \leq G\left( x,a,z\right) +G\left( a,y,z\right) ,$

(v) $G\left( x,y,z\right) \leq 2/3\left( G\left( x,y,a\right) +G\left(
x,a,z\right) +G\left( a,y,z\right) \right) ,$

(vi) $G\left( x,y,z\right) \leq G\left( x,a,a\right) +G\left( y,a,a\right)
+G\left( z,a,a\right) .$
\end{proposition}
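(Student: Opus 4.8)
The plan is to derive the six items one after another, using only properties (i)--(v) of Definition 1.1 and feeding each established item into the proofs of the later ones; the two workhorses throughout are the rectangle inequality (v) and the full symmetry (iv) of the definition, while property (iii) of the definition (the inequality $G(x,x,y)\leq G(x,y,z)$, valid when $z\neq y$) and the contrapositive of property (ii) of the definition (namely $G(a,a,b)=0$ implies $a=b$, since $G\geq 0$) are invoked whenever an argument forces two of the three entries to coincide.

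I would begin with (ii). Rewriting $G(x,y,z)=G(y,z,x)$ by symmetry and applying the rectangle inequality with splitting point $x$ gives $G(y,z,x)\leq G(y,x,x)+G(x,z,x)$, which is $G(x,x,y)+G(x,x,z)$ after symmetry; this is exactly (ii). Item (iii) then drops out of (ii) by setting $z=y$: $G(x,y,y)\leq 2G(x,x,y)=2G(y,x,x)$. For (vi) I would apply the rectangle inequality twice: first $G(x,y,z)\leq G(x,a,a)+G(a,y,z)$, then, since $G(a,y,z)=G(y,z,a)$, a second application with splitting point $a$ bounds this second term by $G(y,a,a)+G(z,a,a)$, which gives (vi).

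For (iv), starting again from $G(x,y,z)=G(x,z,y)\leq G(x,a,a)+G(a,z,y)=G(x,a,a)+G(a,y,z)$, it remains only to replace $G(x,a,a)$ by $G(x,a,z)$; this is precisely property (iii) of the definition in the form $G(a,a,x)\leq G(a,x,z)$, which is legitimate provided $z\neq x$. The degenerate case $z=x$ has to be treated separately: the claim then reads $G(x,x,y)\leq G(x,a,x)+G(a,y,x)$, and one settles it using $G(x,x,y)\leq G(x,y,a)$ when $a\neq y$ (property (iii) of the definition once more) together with a direct comparison when $a=y$. Once (iv) is available for all arguments, (v) follows by applying (iv) to the three cyclic rearrangements $G(x,y,z)$, $G(z,x,y)$, $G(y,z,x)$ with the same point $a$ and symmetrizing the right-hand sides: the three inequalities become $G(x,y,z)\leq G(x,a,z)+G(a,y,z)$, $G(x,y,z)\leq G(a,y,z)+G(x,y,a)$, and $G(x,y,z)\leq G(x,y,a)+G(x,a,z)$, so adding them and dividing by $3$ yields $G(x,y,z)\leq \tfrac{2}{3}\bigl(G(x,y,a)+G(x,a,z)+G(a,y,z)\bigr)$.

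Finally, for (i): assuming $G(x,y,z)=0$, suppose for contradiction that $x,y,z$ are not all equal. If $z\neq y$, then property (iii) of the definition gives $G(x,x,y)\leq G(x,y,z)=0$ and, since also $y\neq z$, $G(x,x,z)\leq G(x,z,y)=0$, so property (ii) of the definition forces $x=y$ and $x=z$, contradicting $z\neq y$; hence $z=y$ and $G(x,y,y)=0$. If moreover $x\neq y$, then $G(y,y,x)\leq G(y,x,y)=G(x,y,y)=0$ by property (iii) of the definition (valid since $y\neq x$), and property (ii) of the definition yields the contradiction $x=y$. Thus $x=y=z$. The only genuinely delicate point anywhere in the argument is the bookkeeping forced by the restriction ``$z\neq y$'' in property (iii) of the definition: it is what makes (i) and (iv) require small case distinctions, whereas (ii), (iii), (v) and (vi) are straightforward telescopings of the rectangle inequality and symmetry.
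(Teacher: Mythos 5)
Your proof is correct in all six items; I checked each step, including the case distinctions forced by the restriction $z\neq y$ in axiom (iii) of the definition, and they all go through (in particular the degenerate case $z=x$ of item (iv) and the two-stage contradiction in item (i) are handled properly). Note that the paper itself offers no proof of this proposition — it is quoted verbatim from Mustafa and Sims \cite{z13} — so there is nothing to compare against; your derivation from symmetry, the rectangle inequality, axiom (iii), and the positivity axiom is the standard one and is self-contained.
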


\begin{definition}
\cite{z13} Let $(X,G)$ be a $G$-metric space and $(x_{n})$ a sequence of
points of $X.$ A point $x\in X$ is said to be the limit of the sequence $%
(x_{n})$ if $lim_{n,m\rightarrow \infty }G(x,x_{n},x_{m})=0,$ and we say
that the sequence $(x_{n})$ is $G$-convergent to $x.$
\end{definition}

\begin{proposition}
\cite{z13} Let $(X,G)$ be a $G$-metric space. Then the following are
equivalent.

(i) $(x_{n})$ is $G$-convergent to $x.$

(ii) $G(x_{n},x_{n},x)\rightarrow 0$ as $n\rightarrow \infty .$

(iii) $G(x_{n},x,x)\rightarrow 0$ as $n\rightarrow \infty .$

(iv) $G(x_{m},x_{n},x)\rightarrow 0$ as $m,n\rightarrow \infty .$
\end{proposition}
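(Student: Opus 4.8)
\emph{Proof proposal.} The plan is to run the cycle of implications
(i) $\Leftrightarrow$ (iv) $\Rightarrow$ (ii) $\Leftrightarrow$ (iii) $\Rightarrow$ (iv),
which is enough to conclude that all four statements are equivalent. The first step is
essentially free: by the full symmetry of $G$ (property (iv) in the definition of a
$G$-metric) one has $G(x,x_{n},x_{m})=G(x_{m},x_{n},x)$ for all $m,n$, so the double
limit $\lim_{n,m\rightarrow \infty }G(x,x_{n},x_{m})=0$ defining $G$-convergence holds
if and only if $\lim_{m,n\rightarrow \infty }G(x_{m},x_{n},x)=0$; this gives
(i) $\Leftrightarrow$ (iv) with no computation.

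Next, for (iv) $\Rightarrow$ (ii) I would simply specialize the double limit in (iv) to
the diagonal $m=n$, obtaining $G(x_{n},x_{n},x)\rightarrow 0$, which is exactly (ii).
For (ii) $\Leftrightarrow$ (iii) I would use part (iii) of the preceding Proposition,
$G(u,v,v)\leq 2G(v,u,u)$. Taking $u=x$, $v=x_{n}$ yields
$G(x_{n},x_{n},x)=G(x,x_{n},x_{n})\leq 2G(x_{n},x,x)$, and taking $u=x_{n}$, $v=x$ yields
$G(x_{n},x,x)\leq 2G(x,x_{n},x_{n})=2G(x_{n},x_{n},x)$; since both quantities are
nonnegative, they are squeezed against one another up to the factor $2$, so one tends to
$0$ precisely when the other does.

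Finally, for (iii) $\Rightarrow$ (iv) I would invoke the rectangle inequality (v) of the
definition with the auxiliary point $a=x$: $G(x_{m},x_{n},x)\leq G(x_{m},x,x)+G(x,x_{n},x)=G(x_{m},x,x)+G(x_{n},x,x)$,
the last equality again being symmetry. By (iii) each term on the right tends to $0$, so
$G(x_{m},x_{n},x)\rightarrow 0$ as $m,n\rightarrow \infty$, which is (iv). (One could
equally well use part (vi) of the preceding Proposition together with $G(x,x,x)=0$.)
Combining everything gives the full chain, hence the equivalence. I do not expect a real
obstacle; the only step needing a little care is precisely this passage
(iii) $\Rightarrow$ (iv), where single-index decay must be upgraded to a genuine double
limit, and this is dispatched cleanly by the rectangle inequality — the squeezing
arguments elsewhere rely only on nonnegativity of $G$, which is immediate from the
definition.
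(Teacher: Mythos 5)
The paper quotes this proposition from Mustafa and Sims without giving a proof, so there is nothing internal to compare against; your argument is correct and is the standard one. The cycle (i) $\Leftrightarrow$ (iv) $\Rightarrow$ (ii) $\Leftrightarrow$ (iii) $\Rightarrow$ (iv) does establish the equivalence of all four statements, each individual step (symmetry of $G$, restriction to the diagonal, the two-sided estimate $G(x_{n},x_{n},x)\leq 2G(x_{n},x,x)\leq 4G(x_{n},x_{n},x)$, and the rectangle inequality with $a=x$) is applied correctly, and you rightly identify (iii) $\Rightarrow$ (iv) as the only step where single-index decay must be upgraded to a double limit.
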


In 2008, 2009 and 2010, Mustafa et al. (\cite{z3}, \cite{z4}, \cite{z1})
gave the following fixed point theorems on some classes of contractive
mappings defined on a $G$-metric space.

\begin{theorem}
\label{g}\cite{z3} Let $(X,G)$ be a complete $G$-metric space and let $%
T:X\rightarrow X$ be a mapping satisfying one of the following conditions:%
\begin{equation}
G\left( Tx,Ty,Tz\right) \leq aG\left( x,y,z\right) +bG\left( x,Tx,Tx\right)
+cG\left( y,Ty,Ty\right) +dG\left( z,Tz,Tz\right)  \label{1}
\end{equation}%
or%
\begin{equation*}
G\left( Tx,Ty,Tz\right) \leq aG\left( x,y,z\right) +bG\left( x,x,Tx\right)
+cG\left( y,y,Ty\right) +dG\left( z,z,Tz\right)
\end{equation*}%
for all $x,y,z\in X$ where $0\leq a+b+c+d<1.$ Then $T$ has a unique fixed
point $($say $u,$ i.e., $Tu=u),$ and $T$ is $G$-continuous at $u.$
\end{theorem}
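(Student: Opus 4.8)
The plan is to run the Picard iteration $x_{n+1}=Tx_{n}$ from an arbitrary $x_{0}\in X$ and adapt the classical Banach argument, the main adjustment being that the three variables of $G$ are tamed by repeatedly forcing two of them to coincide. First I would specialise \eqref{1} by taking $x=x_{n-1}$ and $y=z=x_{n}$: the four terms on the right collapse into multiples of $G(x_{n-1},x_{n},x_{n})$ and $G(x_{n},x_{n+1},x_{n+1})$, and transposing yields
\[
(1-c-d)\,G(x_{n},x_{n+1},x_{n+1})\le (a+b)\,G(x_{n-1},x_{n},x_{n}).
\]
Since $a+b+c+d<1$ gives $1-c-d>0$ and $q:=(a+b)/(1-c-d)\in[0,1)$, induction produces $G(x_{n},x_{n+1},x_{n+1})\le q^{\,n}\,G(x_{0},x_{1},x_{1})$.

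Next I would convert this into a Cauchy estimate. Iterating the rectangle inequality of Definition 1.1 gives, for $m>n$,
\[
G(x_{n},x_{m},x_{m})\le\sum_{k=n}^{m-1}G(x_{k},x_{k+1},x_{k+1})\le\frac{q^{\,n}}{1-q}\,G(x_{0},x_{1},x_{1})\longrightarrow 0,
\]
and a routine application of Proposition 1.1(iv) together with the symmetry axiom upgrades this to $G(x_{n},x_{m},x_{\ell})\to 0$, i.e. $(x_{n})$ is $G$-Cauchy. Completeness then supplies a point $u\in X$ with $x_{n}\to u$.

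Third, to verify $Tu=u$, I would apply \eqref{1} with $x=x_{n}$, $y=z=u$ to estimate $G(x_{n+1},Tu,Tu)=G(Tx_{n},Tu,Tu)$, then feed in the rectangle estimate $G(u,Tu,Tu)\le G(u,x_{n+1},x_{n+1})+G(x_{n+1},Tu,Tu)$ and gather the $G(u,Tu,Tu)$ terms, obtaining
\[
(1-c-d)\,G(u,Tu,Tu)\le G(u,x_{n+1},x_{n+1})+a\,G(x_{n},u,u)+b\,G(x_{n},x_{n+1},x_{n+1}).
\]
The right-hand side tends to $0$ by Proposition 1.2 and the geometric bound above, so $G(u,Tu,Tu)=0$ and hence $Tu=u$ by Proposition 1.1(i). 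Uniqueness is then immediate: if $Tv=v$ as well, \eqref{1} with $x=u$, $y=z=v$ collapses (the $b,c,d$ terms vanish since $G(w,w,w)=0$) to $G(u,v,v)\le a\,G(u,v,v)$, and $a<1$ forces $G(u,v,v)=0$. For $G$-continuity at $u$, take any sequence $y_{n}\to u$; then \eqref{1} with $x=u$, $y=z=y_{n}$, together with $G(u,Tu,Tu)=0$ and $G(y_{n},Ty_{n},Ty_{n})\le G(y_{n},u,u)+G(u,Ty_{n},Ty_{n})$, gives $(1-c-d)\,G(u,Ty_{n},Ty_{n})\le a\,G(u,y_{n},y_{n})+(c+d)\,G(y_{n},u,u)\to 0$ by Proposition 1.2, whence $Ty_{n}\to Tu$.

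I expect the only genuine obstacle to be the bookkeeping at the two places where the rectangle inequality is inserted to move a \emph{diagonal} term $G(\cdot,T\cdot,T\cdot)$ to the left-hand side (the steps $Tu=u$ and $G$-continuity): one must choose the specialisations of \eqref{1} so that the resulting recursion is genuinely one-dimensional, and keep the constants in order so that the coefficient $1-c-d$ stays strictly positive. The second contractive condition needs no new idea: one repeats the argument after interchanging the repeated arguments, or else invokes Proposition 1.1(iii), namely $G(x,y,y)\le 2G(y,x,x)$, to pass between $G(x,Tx,Tx)$ and $G(x,x,Tx)$.
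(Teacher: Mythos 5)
Your argument is correct and is essentially the standard Picard--iteration proof: the paper only quotes this theorem from Mustafa--Obiedat--Awawdeh without reproducing a proof, and the proof in that source runs exactly along your lines (the specialisation $y=z$ in $(\ref{1})$ to get the one--dimensional recursion with ratio $(a+b)/(1-c-d)<1$, the telescoped rectangle inequality for the Cauchy estimate, and the limiting arguments for $Tu=u$, uniqueness and $G$-continuity), with the tacit standing assumption that $a,b,c,d$ are individually nonnegative. One small caution on your last remark: passing between the two contractive conditions via Proposition 1.3(iii) doubles the coefficients $b,c,d$ and would require $a+2(b+c+d)<1$, so for the second condition you should use your first suggestion instead --- rerun the argument with the repeated arguments interchanged, i.e.\ track $G(x_{n},x_{n},x_{n+1})$ by putting $x=x_{n}$, $y=z=x_{n-1}$, which gives the ratio $(a+c+d)/(1-b)<1$ under the stated hypothesis.
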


\begin{theorem}
\label{g1}\cite{z4} Let $(X,G)$ be a $G$-metric space and let $%
T:X\rightarrow X$ be a mapping such that $T$ satisfies the following three
conditions:

$(1)$ $G\left( Tx,Ty,Tz\right) \leq aG\left( x,Tx,Tx\right) +bG\left(
y,Ty,Ty\right) +cG\left( z,Tz,Tz\right) $ for all $x,y,z\in X$ where $%
0<a+b+c<1,$

$(2)$ $T$ is $G$-continuous at a point $u\in X,$

$(3)$ there is $x\in X;$ $\{T^{n}x\}$ has a subsequence $\{T^{n_{i}}x\}$ $G$%
-converges to $u.$

Then $u$ is the unique fixed point of $T.$
\end{theorem}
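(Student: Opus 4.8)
The plan is to deduce everything from the contractive inequality in condition $(1)$, using conditions $(2)$ and $(3)$ only to identify the limit point $u$ as a fixed point. So I would split the argument into two parts: (a) $Tu=u$, and (b) uniqueness.

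First I would fix the point $x\in X$ furnished by condition $(3)$, write $x_{n}=T^{n}x$, and extract the standard geometric decay of consecutive $G$-distances. Substituting $(x,y,z)=(x_{n-1},x_{n},x_{n})$ into condition $(1)$ and using $Tx_{k}=x_{k+1}$ yields, with $t_{n}:=G(x_{n},x_{n+1},x_{n+1})$,
\[
t_{n}\le a\,t_{n-1}+(b+c)\,t_{n}.
\]
Since $b+c\le a+b+c<1$, the coefficient $1-b-c$ is positive, so one can solve for $t_{n}$ to obtain $t_{n}\le q\,t_{n-1}$ with $q=\frac{a}{1-b-c}$; moreover $q\in[0,1)$ because $a<1-b-c$. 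Hence $t_{n}\le q^{n}t_{0}\to 0$.

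Next I would show that the shifted subsequence $\{x_{n_{i}+1}\}$ also $G$-converges to $u$. By the rectangle inequality (axiom (v) of the definition of a $G$-metric),
\[
G(x_{n_{i}+1},u,u)\le G(x_{n_{i}+1},x_{n_{i}},x_{n_{i}})+G(x_{n_{i}},u,u),
\]
and by the comparison $G(p,q,q)\le 2G(q,p,p)$ from Proposition 1.1 the first summand is at most $2t_{n_{i}}$. As $i\to\infty$ we have $t_{n_{i}}\to 0$ and, by Proposition 1.2, $G(x_{n_{i}},u,u)\to 0$ (because $x_{n_{i}}\to u$); therefore $x_{n_{i}+1}\to u$. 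On the other hand $x_{n_{i}+1}=Tx_{n_{i}}$, so $G$-continuity of $T$ at $u$ (condition $(2)$) gives $x_{n_{i}+1}\to Tu$. Since $G$-limits are unique — if $y_{k}\to p$ and $y_{k}\to q$ then $G(p,q,q)\le G(p,y_{k},y_{k})+G(y_{k},q,q)\to 0$, forcing $p=q$ by Proposition 1.1(i) — we conclude $Tu=u$.

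Finally, for uniqueness suppose $Tv=v$ and substitute $(x,y,z)=(u,v,v)$ into condition $(1)$: the right-hand side equals $a\,G(u,u,u)+(b+c)\,G(v,v,v)=0$, while the left-hand side is $G(Tu,Tv,Tv)=G(u,v,v)$; hence $G(u,v,v)=0$ and so $u=v$ by Proposition 1.1(i). The step I expect to be the crux is the passage from $x_{n_{i}}\to u$ to $x_{n_{i}+1}\to u$: completeness is not assumed here, so one cannot simply assert that $\{x_{n}\}$ converges, and it is precisely the decay $t_{n}\to 0$ (together with the comparison in Proposition 1.1) that bridges the gap. An equivalent way to organize this step is to first show, via $\sum_{n}t_{n}<\infty$ and iterated rectangle inequalities, that $\{x_{n}\}$ is $G$-Cauchy, and then invoke the elementary fact that a $G$-Cauchy sequence possessing a $G$-convergent subsequence converges to the same limit.
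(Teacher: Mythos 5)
Your proof is correct. The paper states this theorem as a quoted preliminary (from the cited reference of Mustafa, Shatanawi and Bataineh) without reproducing a proof, and your argument --- the geometric decay $t_{n}\le \frac{a}{1-b-c}\,t_{n-1}$ with ratio in $[0,1)$, the transfer from $x_{n_i}\to u$ to $x_{n_i+1}\to u$ via the rectangle inequality together with $G(x,y,y)\le 2G(y,x,x)$, the identification $Tu=u$ by $G$-continuity and uniqueness of $G$-limits, and the one-line uniqueness of the fixed point --- is exactly the standard proof given in that source, with your closing remark correctly explaining why completeness is not needed.
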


\begin{theorem}
\label{g2}\cite{z1} Let $(X,G)$ be a complete $G$-metric space and let $%
T:X\rightarrow X$ be a mapping satisfying the condition%
\begin{equation}
G(Tx,Ty,Tz)\leq \alpha G(x,y,z)+\beta \left\{
G(y,Ty,Ty)+G(z,Tz,Tz)+G(x,Tx,Tx)\right\}  \label{2}
\end{equation}%
for all $x,y,z\in X,$ where $0\leq \alpha +3\beta <1.$ Then $T$ has unique
fixed point (say $u$), and $T$ is $G$-continuous at $u$.
\end{theorem}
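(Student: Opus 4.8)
The plan is to run the classical Picard/Banach argument in the $G$-metric setting. Fix $x_0\in X$ and set $x_{n+1}=Tx_n$ for $n\ge 0$. First I would apply the contractive condition \eqref{2} to the triple $(x_{n-1},x_n,x_n)$; since $G(x_{n-1},Tx_{n-1},Tx_{n-1})=G(x_{n-1},x_n,x_n)$ and $G(x_n,Tx_n,Tx_n)=G(x_n,x_{n+1},x_{n+1})$, this gives
\[ G(x_n,x_{n+1},x_{n+1})\le \alpha\,G(x_{n-1},x_n,x_n)+\beta\bigl(2\,G(x_n,x_{n+1},x_{n+1})+G(x_{n-1},x_n,x_n)\bigr). \]
Since $\alpha+3\beta<1$ with $\alpha,\beta\ge 0$ forces $\beta<1/3$, we have $1-2\beta>0$, so we may rearrange to
\[ G(x_n,x_{n+1},x_{n+1})\le q\,G(x_{n-1},x_n,x_n),\qquad q:=\frac{\alpha+\beta}{1-2\beta}, \]
and the crucial numerical point is that $q<1$ is \emph{equivalent} to $\alpha+3\beta<1$. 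Iterating yields $G(x_n,x_{n+1},x_{n+1})\le q^{\,n}G(x_0,x_1,x_1)$.

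Next I would show that $(x_n)$ is $G$-Cauchy: applying the rectangle inequality (Definition 1.1(v)) repeatedly, for $m>n$ one has $G(x_n,x_m,x_m)\le\sum_{k=n}^{m-1}G(x_k,x_{k+1},x_{k+1})\le\frac{q^{\,n}}{1-q}\,G(x_0,x_1,x_1)\to 0$, and, via symmetry together with the convergence characterizations of Proposition 1.2, this makes $(x_n)$ $G$-Cauchy; completeness then gives $x_n\to u$ for some $u\in X$. To see $Tu=u$, I would apply \eqref{2} to $(x_n,u,u)$, obtaining
\[ G(x_{n+1},Tu,Tu)\le\alpha\,G(x_n,u,u)+\beta\bigl(2\,G(u,Tu,Tu)+G(x_n,x_{n+1},x_{n+1})\bigr), \]
combine this with $G(u,Tu,Tu)\le G(u,x_{n+1},x_{n+1})+G(x_{n+1},Tu,Tu)$ (rectangle inequality), and let $n\to\infty$, using $G(x_n,u,u)\to 0$, $G(u,x_{n+1},x_{n+1})\to 0$ and $G(x_n,x_{n+1},x_{n+1})\to 0$ (Proposition 1.2 and symmetry); the surviving inequality $G(u,Tu,Tu)\le 2\beta\,G(u,Tu,Tu)$ forces $G(u,Tu,Tu)=0$, i.e.\ $Tu=u$ by Proposition 1.1(i). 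For uniqueness, if $Tv=v$ as well, then \eqref{2} applied to $(u,v,v)$ collapses (every $G(\cdot,T\cdot,T\cdot)$ term vanishes) to $G(u,v,v)\le\alpha\,G(u,v,v)$, and $\alpha<1$ gives $v=u$.

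For $G$-continuity at $u$, I would take an arbitrary sequence $y_n\to u$ and show $Ty_n\to u$. Applying \eqref{2} to $(u,y_n,y_n)$ and using $Tu=u$ gives $G(u,Ty_n,Ty_n)\le\alpha\,G(u,y_n,y_n)+2\beta\,G(y_n,Ty_n,Ty_n)$; inserting this into $G(y_n,Ty_n,Ty_n)\le G(y_n,u,u)+G(u,Ty_n,Ty_n)$ (rectangle inequality) and solving (once more $1-2\beta>0$) shows $G(y_n,Ty_n,Ty_n)\to 0$, since $G(y_n,u,u)\to 0$ and $G(u,y_n,y_n)\to 0$ by Proposition 1.2 and symmetry. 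Then \eqref{2} applied to $(y_n,u,u)$ yields $G(Ty_n,Tu,Tu)\le\alpha\,G(y_n,u,u)+\beta\,G(y_n,Ty_n,Ty_n)\to 0$, and since $Tu=u$ this says $G(Ty_n,u,u)\to 0$, i.e.\ $Ty_n\to u$ by Proposition 1.2.

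None of these steps is deep, but the main point requiring care is that one cannot simply pass to the limit inside $G$ as though it were jointly continuous (that is true, but not established in the excerpt): every limit must be routed through the rectangle inequality and the convergence characterizations of Proposition 1.2, and one must keep track of the asymmetry between $G(x,y,y)$ and $G(y,x,x)$ (bridged, when needed, by Proposition 1.1(iii)). The only genuinely delicate bookkeeping is verifying that the single hypothesis $\alpha+3\beta<1$ produces contraction factors strictly less than $1$ in both the Cauchy estimate and the continuity estimate — which is exactly why the relevant constant appears in the form $(\alpha+\beta)/(1-2\beta)$.
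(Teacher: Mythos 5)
This theorem appears in the paper only as a quoted preliminary from \cite{z1}, with no proof given, so there is nothing in-paper to compare against; your argument is the standard Picard-iteration proof that the cited source itself uses. Every step checks out: the specialization of \eqref{2} to $(x_{n-1},x_n,x_n)$ giving the contraction factor $(\alpha+\beta)/(1-2\beta)<1$ (correctly equivalent to $\alpha+3\beta<1$), the rectangle-inequality Cauchy estimate, and the fixed-point, uniqueness, and continuity limits all properly routed through Proposition 1.5 rather than through an unjustified joint continuity of $G$ — I see no gaps.
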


\begin{theorem}
\label{g3}\cite{z1} Let $(X,G)$ be complete $G$-metric space and let $%
T:X\rightarrow X$ be a mapping satisfying the condition%
\begin{equation}
G(Tx,Ty,Tz)\leq \alpha G(x,y,z)+\beta \max \left\{
G(x,Tx,Tx),G(y,Ty,Ty),G(z,Tz,Tz)\right\}  \label{3}
\end{equation}%
for all $x,y,z\in X,$ where $0\leq \alpha +\beta <1.$ Then $T$ has unique
fixed point $($say $u),$ and $T$ is $G$-continuous at $u.$
\end{theorem}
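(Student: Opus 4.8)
The plan is to adapt the classical Picard-iteration argument to the $G$-metric setting; the only genuinely new ingredient is the bookkeeping forced by the $\max$ term in \eqref{3}. Fix $x_{0}\in X$ and set $x_{n+1}=Tx_{n}$. First I would apply \eqref{3} with $x=x_{n-1}$ and $y=z=x_{n}$ to get
\begin{equation*}
G(x_{n},x_{n+1},x_{n+1})\leq \alpha\, G(x_{n-1},x_{n},x_{n})+\beta \max\bigl\{G(x_{n-1},x_{n},x_{n}),\,G(x_{n},x_{n+1},x_{n+1})\bigr\}.
\end{equation*}
A two-case analysis --- according to which of the two quantities attains the maximum --- shows in \emph{both} cases that
\begin{equation*}
G(x_{n},x_{n+1},x_{n+1})\leq q\, G(x_{n-1},x_{n},x_{n}),\qquad q:=\max\Bigl\{\tfrac{\alpha}{1-\beta},\,\alpha+\beta\Bigr\},
\end{equation*}
and the hypothesis $\alpha+\beta<1$ ensures $q<1$; iterating gives $G(x_{n},x_{n+1},x_{n+1})\leq q^{n}G(x_{0},x_{1},x_{1})$.

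Next I would show $(x_{n})$ is $G$-Cauchy. Applying the rectangle inequality (Definition 1.1(v)) repeatedly, for $m>n$,
\begin{equation*}
G(x_{n},x_{m},x_{m})\leq \sum_{k=n}^{m-1}G(x_{k},x_{k+1},x_{k+1})\leq \frac{q^{n}}{1-q}\,G(x_{0},x_{1},x_{1})\longrightarrow 0 ,
\end{equation*}
so $(x_{n})$ is $G$-Cauchy and, by completeness, $G$-converges to some $u\in X$. To show $Tu=u$, apply \eqref{3} with $x=x_{n}$, $y=z=u$ and combine it with the rectangle inequality $G(u,Tu,Tu)\leq G(u,x_{n+1},x_{n+1})+G(x_{n+1},Tu,Tu)$ and the bound $\max\{a,b\}\leq a+b$; since $G(x_{n},u,u)\to 0$, $G(x_{n+1},u,u)\to 0$ and $G(x_{n},x_{n+1},x_{n+1})\to 0$ (Proposition 1.2), letting $n\to\infty$ leaves $G(u,Tu,Tu)\leq \beta\, G(u,Tu,Tu)$, whence $G(u,Tu,Tu)=0$ and $u=Tu$ by Proposition 1.1(i).

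Uniqueness is then immediate: if $Tu=u$ and $Tv=v$, \eqref{3} gives $G(u,v,v)\leq \alpha\, G(u,v,v)$ since the $\max$ term vanishes, so $G(u,v,v)=0$ and $u=v$. For $G$-continuity at $u$, take $y_{n}\to u$, apply \eqref{3} with $x=u$, $y=z=y_{n}$, use $G(u,Tu,Tu)=0$ together with $G(y_{n},Ty_{n},Ty_{n})\leq G(y_{n},u,u)+G(u,Ty_{n},Ty_{n})$ to arrive at $(1-\beta)\,G(u,Ty_{n},Ty_{n})\leq \alpha\, G(u,y_{n},y_{n})+\beta\, G(y_{n},u,u)\to 0$, so $Ty_{n}\to u=Tu$. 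The routine parts are the telescoping Cauchy estimate and the two limit passages; the step I expect to require the most care is the case split that produces the contraction factor $q$, together with the need throughout to route every estimate through the $G$-metric rectangle inequality and to keep careful track of which of the three arguments carries the repeated entry, rather than relying on an ordinary triangle inequality.
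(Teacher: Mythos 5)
Your argument is correct. Note, however, that the paper does not prove this statement at all: Theorem \ref{g3} is quoted verbatim from the cited source \cite{z1} as a preliminary existence result, so there is no in-paper proof to compare against. Your Picard-iteration argument --- the two-case analysis of the $\max$ term yielding the contraction factor $q=\max\{\alpha/(1-\beta),\,\alpha+\beta\}<1$, the telescoping Cauchy estimate via the rectangle inequality, and the limit passages for $Tu=u$, uniqueness, and $G$-continuity at $u$ --- is the standard route taken in \cite{z1} and is sound as written.
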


Note that all above results deal with existence of fixed points without
finding or approximating them. The reason behind is the unavailablity of
convex structure in $G$-metric spaces by that time. Note that it is
imperative to have a convex structure in order to approximate fixed points
using various iterative processes, for example, Mann iterative process.

Keeping the above in mind, in this paper, we first define the concept of
convexity in $G$-metric spaces. We then use Mann iterative process in this
newly defined convex $G$-metric space to prove some convergence results for
approximating fixed points of some classes of mappings. In this way, several
existence results (including above Theorems 1.1-1.4) can be extended to
those approximating fixed points. Our results are just new in the setting.

In 1970, Takahashi \cite{Ta} introduced the notion of convex metric spaces
and studied the approximation of fixed points for nonexpansive mappings in
this setting. Later on, many authors (\cite{AR, Kim, Liu, yil}) discussed
the existence of fixed points and convergence of different iterative
processes for various mappings in convex metric spaces. .

We recall some definitions as follows:

\begin{definition}
\label{Cvx2}\cite{Ta} A convex structure in a metric space $(X,d)$ is a
mapping $W:X^{2}\times \lbrack 0,1]\rightarrow X$ satisfying, for all $%
x,y,u\in X$ and all $\alpha \in \lbrack 0,1],$%
\begin{equation}
d\left( W\left( x,y;\alpha \right) ,u\right) \leq \alpha d\left( x,u\right)
+\left( 1-\alpha \right) d\left( y,u\right) .  \label{y1}
\end{equation}%
The triplet $(X,d,W)$ is called a convex metric space.
\end{definition}

Modi et al. \cite{a} introduced convex structure in $G$-metric spaces as
follows.

\begin{definition}
\label{g7}\cite{a} Let $(X,G)$ be a $G$-metric space. A mapping $%
W:X^{3}\times (0,1]\rightarrow X$ is said to be a convex structure on $(X,G)$
if for each $(x,y,z,\lambda )\in X^{3}\times (0,1]$ and for all $u,$ $v\in X$
the condition%
\begin{equation*}
G(u,v,W(x,y,z,\lambda ))\leq \frac{\lambda }{3}G(u,v,x)+\frac{\lambda }{3}%
G(u,v,y)+\frac{\lambda }{3}G(u,v,z)
\end{equation*}%
holds. If $W$ is convex structure on a $G$-metric space $(X,G),$ then the
triplet $(X,G,W)$ is called a convex $G$-metric space.
\end{definition}

Using this definition, they gave some fixed point results for weakly
compatible mappings.

Continuing, we define convex structure in $G$-metric spaces in a differnt
way as follows. Our defintion is more natural than that due to Modi et al. 
\cite{a}. We do not divide $\lambda $ into three equal parts but let $%
\lambda $ and $\beta $ take any values in $[0,1]$ as far as their sum is 1.
We keep part of the domain as $X^{2}$ (and hence only two terms on the right
hand side) which is a better analog to the well-celebrated convexity of
Takahashi \cite{Ta} and simpler in calculations than taking $X^{3}.$ Here is
our definition of convex structure in a $G$-metric space.

\begin{definition}
Let $(X,G)$ be a $G$-metric space. A mapping $W:X^{2}\times I^{2}\rightarrow
X$ is termed as a convex structure on $X$ if $G(W(x,y;\lambda ,\beta
),u,v)\leq \lambda G(x,u,v)+\beta G(y,u,v)$ for real numbers $\lambda $ and $%
\beta $ in $I=[0,1]$ satisfying $\lambda +\beta =1$ and $x,y,u$ and $v\in X.$

A $G$-metric space $(X,G)$ with a convex structure $W$ is called a convex $G$%
-metric space and denoted as $(X,G,W).$

A nonempty subset $C$ of a convex $G$-metric space $(X,G,W)$ is said to be
convex if $W(x,y;a,b)\in C$ for all $x,y\in C$ and $a,b\in I.$\ 
\end{definition}

Next, we transform the Mann iterative process to a convex $G$-metric space
as follows.

\begin{definition}
Let $(X,G,W)$ be convex $G$-metric space with convex structure $W$ and $%
T:X\rightarrow X$ be a mapping. Let $\left\{ {\alpha }_{n}\right\} $ be a
sequence in $[0,1]$ for $n\in 
\mathbb{N}
.$ Then for any given $x_{0}\in X,$ the iterative process defined by the
sequence $\left\{ x_{n}\right\} $ as%
\begin{equation}
x_{n+1}=W\left( x_{n},Tx_{n};1-{\alpha }_{n},{\alpha }_{n}\right) ,\ \text{\
\ \ }n\in 
\mathbb{N}
,  \label{m}
\end{equation}%
is called Mann iterative process in the convex metric space $(X,G,W).$
\end{definition}

It follows from the structure of convex $G$-metric space that%
\begin{eqnarray*}
G(x_{n+1},u,v) &=&G(W\left( x_{n},Tx_{n};1-{\alpha }_{n},{\alpha }%
_{n}\right) ,u,v) \\
&\leq &\left( 1-{\alpha }_{n}\right) G(x_{n},u,v)+a_{n}G(Tx_{n},u,v).
\end{eqnarray*}

Now, having given the most needed definition of convex structure on a $G$%
-metric space and re-written the Mann iterative process (\ref{m}) in this
setting, we are able to transform the above mentioned existence results
(Theorems \ref{g}, \ref{g1}, \ref{g2} and \ref{g3}) to those approximating
fixed points through strong convergence. And this is what we are going to do
in the next section.

\section{\textbf{Main Results}}

The following is our first result which approximates the fixed points of the
mappings $(\ref{1})$ but naturally in a convex $G$-metric space.

\begin{theorem}
\label{main1} Let $(X,G,W)$ be a convex $G$-metric space with a convex
structure $W$ and let $T:X\rightarrow X$ be a mapping with a fixed point $u$
satisfying the condition $(\ref{1})$ for all $x,y,z\in X$ where $a,b,c,d$
are nonnegative real numbers such that $0\leq a+3b<1.$ Let $\left\{
x_{n}\right\} $ be defined iteratively by $(\ref{m})$ and $x_{0}\in X,$ with 
$\left\{ {\alpha }_{n}\right\} \subset \lbrack 0,1]$ satisfying $%
\tsum\limits_{n=0}^{\infty }{\alpha }_{n}=\infty .$ Then $\left\{
x_{n}\right\} $ converges strongly to a fixed point of $T.$

\begin{proof}
Since $u$ is a fixed point of the mapping $T,$ we have 
\begin{eqnarray}
G(x_{n+1},u,u) &=&G(W\left( x_{n},Tx_{n};1-{\alpha }_{n},{\alpha }%
_{n}\right) ,u,u)  \label{4} \\
&\leq &\left( 1-{\alpha }_{n}\right) G(x_{n},u,u)+a_{n}G(Tx_{n},u,u)  \notag
\\
&=&\left( 1-{\alpha }_{n}\right) G(x_{n},u,u)+a_{n}G(Tx_{n},Tu,Tu).  \notag
\end{eqnarray}%
Using the inequality (\ref{1}) for $G(Tx_{n},Tu,Tu)$,%
\begin{eqnarray}
G(Tx_{n},Tu,Tu) &\leq &aG(x_{n},u,u)+bG\left( x_{n},Tx_{n},Tx_{n}\right)
\label{5} \\
&&+cG\left( u,Tu,Tu\right) +dG\left( u,Tu,Tu\right)  \notag \\
&=&aG(x_{n},u,u)+bG\left( x_{n},Tx_{n},Tx_{n}\right)  \notag \\
&\leq &aG(x_{n},u,u)+b\left[ G(x_{n},u,u)+G\left( u,u,Tx_{n}\right) \right] 
\notag \\
&\leq &aG(x_{n},u,u)+b\left[ G(x_{n},u,u)+2G(Tx_{n},u,u)\right] .  \notag
\end{eqnarray}%
Thus from (\ref{5}) we have 
\begin{equation}
G(Tx_{n},Tu,Tu)\leq \frac{a+b}{1-2b}G(x_{n},u,u).  \label{6}
\end{equation}%
From the inequalities (\ref{4}) and (\ref{6}), we obtain 
\begin{eqnarray}
G(x_{n+1},u,u) &\leq &\left( 1-{\alpha }_{n}\right) G(x_{n},u,u)+a_{n}\frac{%
a+b}{1-2b}G(x_{n},u,u)  \label{7} \\
&=&\left[ 1-{\alpha }_{n}\left( 1-\frac{a+b}{1-2b}\right) \right]
G(x_{n},u,u)  \notag \\
&=&\left[ 1-{\alpha }_{n}\left( 1-\delta \right) \right] G(x_{n},u,u)
\end{eqnarray}%
where 
\begin{equation*}
\delta =\frac{a+b}{1-2b}.
\end{equation*}%
Note that $0\leq \delta <1$ and $1-2b\neq 0.$ Indeed, 
\begin{equation*}
a+3b<1\Longrightarrow a+b<1-2b\Rightarrow \frac{a+b}{1-2b}<1.
\end{equation*}

Moreover, if $1-2b=0,$ then from above calculations, $a+b<1-2b$ means $a<-b, 
$ which is contradiction to $a\geq 0.$

Inductively we get%
\begin{equation}
G(x_{n+1},u,u)\leq \tprod\limits_{k=0}^{n}\left[ 1-{\alpha }_{k}\left(
1-\delta \right) \right] G(x_{0},u,u).  \label{9}
\end{equation}%
As $\delta <1,{\alpha }_{k}\in \lbrack 0,1]$ and $\tsum\limits_{n=0}^{\infty
}{\alpha }_{n}=\infty ,$ it results that 
\begin{equation*}
\lim_{n\rightarrow \infty }\tprod\limits_{k=0}^{n}\left[ 1-{\alpha }%
_{k}\left( 1-\delta \right) \right] =0,
\end{equation*}%
which by (\ref{4}) implies%
\begin{equation*}
\lim_{n\rightarrow \infty }G(x_{n},u,u)=0.
\end{equation*}%
Hence the sequence $\left\{ x_{n}\right\} $ defined iteratively by $(\ref{m})
$ converges strongly to the fixed point of $T$.
\end{proof}
\end{theorem}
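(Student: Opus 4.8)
The plan is to estimate $G(x_{n+1},u,u)$ in terms of $G(x_n,u,u)$ and show the resulting factor forces the sequence to zero. I would start from the Mann definition $(\ref{m})$ and apply the convex structure inequality, noting that $u$ is a fixed point so that $G(Tx_n,u,u) = G(Tx_n,Tu,Tu)$; this turns the right-hand side into $(1-\alpha_n)G(x_n,u,u) + \alpha_n G(Tx_n,Tu,Tu)$. The job is then to bound $G(Tx_n,Tu,Tu)$ linearly by $G(x_n,u,u)$ using the contractive condition $(\ref{1})$.

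Next I would feed $x=x_n$, $y=z=u$ into $(\ref{1})$. The terms $cG(u,Tu,Tu)$ and $dG(u,Tu,Tu)$ vanish since $Tu=u$, leaving $G(Tx_n,Tu,Tu) \le aG(x_n,u,u) + bG(x_n,Tx_n,Tx_n)$. The term $G(x_n,Tx_n,Tx_n)$ still refers to $Tx_n$, so I would control it by the rectangle inequality and Proposition 1.1(iii): $G(x_n,Tx_n,Tx_n) \le G(x_n,u,u) + G(u,u,Tx_n) \le G(x_n,u,u) + 2G(Tx_n,u,u)$. Substituting back and collecting the $G(Tx_n,u,u)=G(Tx_n,Tu,Tu)$ terms on the left gives $(1-2b)G(Tx_n,Tu,Tu) \le (a+b)G(x_n,u,u)$, i.e. $G(Tx_n,Tu,Tu) \le \delta\, G(x_n,u,u)$ with $\delta = \frac{a+b}{1-2b}$.

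I would then verify that the hypothesis $0 \le a+3b < 1$ makes this legitimate: $a+3b<1$ rearranges to $a+b<1-2b$, which both shows $1-2b>0$ (otherwise $a<-b\le 0$, contradicting $a\ge 0$) and gives $\delta<1$; also $\delta\ge 0$ since $a,b\ge 0$. Plugging the bound into the first estimate yields $G(x_{n+1},u,u) \le [1-\alpha_n(1-\delta)]G(x_n,u,u)$, and iterating from $n$ down to $0$ produces $G(x_{n+1},u,u) \le \prod_{k=0}^{n}[1-\alpha_k(1-\delta)]\, G(x_0,u,u)$.

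The final step is the standard fact that since $1-\delta>0$, $\alpha_k\in[0,1]$, and $\sum_{k=0}^\infty \alpha_k = \infty$, the product $\prod_{k=0}^{n}[1-\alpha_k(1-\delta)]$ tends to $0$ (each factor lies in $[0,1]$, and one can bound the product by $\exp(-(1-\delta)\sum_{k=0}^n\alpha_k)$). Hence $G(x_n,u,u)\to 0$, which by Proposition 1.2 means $\{x_n\}$ converges strongly to the fixed point $u$ of $T$. I do not expect any real obstacle here; the only mildly delicate point is the bookkeeping that turns $G(x_n,Tx_n,Tx_n)$ — which involves the "wrong" point $Tx_n$ — into something expressed purely in $G(x_n,u,u)$ and $G(Tx_n,u,u)$, so that the $G(Tx_n,\cdot,\cdot)$ terms can be absorbed on the left; this is exactly where the factor $1-2b$ (and hence the need $b<\tfrac12$, subsumed in $a+3b<1$) enters.
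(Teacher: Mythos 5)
Your proposal is correct and follows essentially the same route as the paper: the same substitution $x=x_n$, $y=z=u$ into $(\ref{1})$, the same use of the rectangle inequality and Proposition 1.1(iii) to absorb the $G(Tx_n,\cdot,\cdot)$ terms and produce $\delta=\frac{a+b}{1-2b}$, the same verification that $a+3b<1$ gives $1-2b>0$ and $\delta<1$, and the same telescoping product argument with $\sum\alpha_n=\infty$. No gaps.
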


The following corollary gives approximation result for the of mappings used
inTheorem 1.3.

\begin{corollary}
Let $(X,G,W)$ be a convex $G$-metric space with a convex structure $W$ and
let $T:X\rightarrow X$ be a mapping with a fixed point $u$ satisfying the
following inequality%
\begin{equation}
G\left( Tx,Ty,Tz\right) \leq aG\left( x,y,z\right) +b\left\{ G\left(
x,Tx,Tx\right) +G\left( y,Ty,Ty\right) +G\left( z,Tz,Tz\right) \right\}
\label{9,1}
\end{equation}%
for all $x,y,z\in X$ where $0<a+3b<1.$ Let $\left\{ x_{n}\right\} $ be
defined iteratively by $(\ref{m})$ and $x_{0}\in X,$ with $\left\{ {\alpha }%
_{n}\right\} \subset \lbrack 0,1]$ satisfying $\tsum\limits_{n=0}^{\infty }{%
\alpha }_{n}=\infty .$ Then $\left\{ x_{n}\right\} $ converges strongly to a
fixed point of $T.$
\end{corollary}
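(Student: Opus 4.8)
The plan is to mimic the proof of Theorem \ref{main1} verbatim, since the contractive condition (\ref{9,1}) is nothing but (\ref{1}) with the single coefficient $b$ playing the role of each of $b,c,d$. First I would start from the Mann iterate and use the defining inequality of the convex structure $W$ together with $Tu=u$:
\begin{equation*}
G(x_{n+1},u,u)\leq (1-\alpha_{n})G(x_{n},u,u)+\alpha_{n}G(Tx_{n},Tu,Tu).
\end{equation*}
Then I would apply (\ref{9,1}) with $x=x_{n}$, $y=z=u$. The terms $G(u,Tu,Tu)$ vanish because $u$ is fixed, leaving
\begin{equation*}
G(Tx_{n},Tu,Tu)\leq aG(x_{n},u,u)+bG(x_{n},Tx_{n},Tx_{n}).
\end{equation*}
This is exactly the expression that appears after the first two lines of (\ref{5}), so the estimate proceeds identically.

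Next I would control $G(x_{n},Tx_{n},Tx_{n})$ exactly as in (\ref{5}): by the rectangle inequality (Definition 1.1(v)) $G(x_{n},Tx_{n},Tx_{n})\leq G(x_{n},u,u)+G(u,u,Tx_{n})$, and then by Proposition 1.1(iii) $G(u,u,Tx_{n})\leq 2G(Tx_{n},u,u)$, giving
\begin{equation*}
G(Tx_{n},Tu,Tu)\leq (a+b)G(x_{n},u,u)+2bG(Tx_{n},u,u).
\end{equation*}
Solving for $G(Tx_{n},Tu,Tu)=G(Tx_{n},u,u)$ yields
\begin{equation*}
G(Tx_{n},u,u)\leq \frac{a+b}{1-2b}G(x_{n},u,u)=\delta G(x_{n},u,u),
\end{equation*}
where $\delta=\frac{a+b}{1-2b}$. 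The hypothesis $a+3b<1$ gives $a+b<1-2b$, hence $0\leq\delta<1$ and $1-2b\neq 0$ (if $1-2b=0$ then $a<-b$, contradicting $a\geq 0$) — precisely the argument already given in Theorem \ref{main1}.

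Substituting back, $G(x_{n+1},u,u)\leq[1-\alpha_{n}(1-\delta)]G(x_{n},u,u)$, and iterating,
\begin{equation*}
G(x_{n+1},u,u)\leq \prod_{k=0}^{n}[1-\alpha_{k}(1-\delta)]\,G(x_{0},u,u).
\end{equation*}
Since $0\leq\delta<1$, $\alpha_{k}\in[0,1]$, and $\sum_{n=0}^{\infty}\alpha_{n}=\infty$, the infinite product tends to $0$, so $G(x_{n},u,u)\to 0$, i.e.\ $\{x_{n}\}$ converges strongly to the fixed point $u$ of $T$. There is no real obstacle here: the only point requiring a word of care is verifying $0\leq\delta<1$ and $1-2b\neq 0$ from $0<a+3b<1$, and that the vanishing of the infinite product follows from the divergence of $\sum\alpha_{n}$ (using $1-t\leq e^{-t}$ so that $\prod_{k=0}^{n}[1-\alpha_k(1-\delta)]\leq\exp(-(1-\delta)\sum_{k=0}^{n}\alpha_k)\to 0$) — both of which are already handled in the proof of Theorem \ref{main1} and can simply be cited. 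Hence the corollary follows as a direct specialization, and I would in fact state the proof in one or two lines: apply Theorem \ref{main1} with $c=d=b$.
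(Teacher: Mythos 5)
Your proposal is correct and matches the paper's approach exactly: the paper's proof is precisely the one-line observation you end with, namely that setting $b=c=d$ in condition (\ref{1}) yields condition (\ref{9,1}), so the result follows directly from Theorem \ref{main1}. The detailed steps you reproduce are just the proof of Theorem \ref{main1} itself and need not be repeated.
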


\begin{proof}
If we let $b=c=d$ in condition (\ref{1}), then it becomes condition (\ref%
{9,1}) and the proof follows from Theorem \ref{main1}.
\end{proof}

We can also get easily the following more general result if we replace sum
with max in (\ref{9,1}) at a proper place.

\begin{corollary}
Let $(X,G,W)$ be a convex $G$-metric space with a convex structure $W$ and
let $T:X\rightarrow X$ be a mapping with a fixed point $u$ satisfying the
following inequality%
\begin{equation}
G\left( Tx,Ty,Tz\right) \leq aG\left( x,y,z\right) +b\max \left\{ G\left(
x,Tx,Tx\right) ,G\left( y,Ty,Ty\right) ,G\left( z,Tz,Tz\right) \right\}
\label{9,12}
\end{equation}%
for all $x,y,z\in X$ where $0<a+3b<1.$ Let $\left\{ x_{n}\right\} $ be
defined iteratively by $(\ref{m})$ and $x_{0}\in X,$ with $\left\{ {\alpha }%
_{n}\right\} \subset \lbrack 0,1]$ satisfying $\tsum\limits_{n=0}^{\infty }{%
\alpha }_{n}=\infty .$Then $\left\{ x_{n}\right\} $ converges strongly to a
fixed point of $T.$
\end{corollary}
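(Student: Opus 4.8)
The plan is to run the argument of Theorem \ref{main1} essentially verbatim, the only new point being the remark made just before the statement: once two of the three slots in $(\ref{9,12})$ are occupied by the fixed point, the $\max$ and the sum coincide. First I would start, as there, from the convex structure estimate applied to the Mann iterate with $u=Tu$:
\[
G(x_{n+1},u,u)\le (1-\alpha_n)G(x_n,u,u)+\alpha_n G(Tx_n,u,u)=(1-\alpha_n)G(x_n,u,u)+\alpha_n G(Tx_n,Tu,Tu).
\]
Then I would apply $(\ref{9,12})$ with $x=x_n$ and $y=z=u$. Since $G(u,Tu,Tu)=G(u,u,u)=0$, the bracket collapses,
\[
\max\{G(x_n,Tx_n,Tx_n),\,G(u,Tu,Tu),\,G(u,Tu,Tu)\}=G(x_n,Tx_n,Tx_n),
\]
so $(\ref{9,12})$ becomes $G(Tx_n,Tu,Tu)\le aG(x_n,u,u)+bG(x_n,Tx_n,Tx_n)$, which is exactly inequality $(\ref{5})$ in the proof of Theorem \ref{main1} (the case $b=c=d$).

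From this point the computation is identical to that theorem. I would bound $G(x_n,Tx_n,Tx_n)\le G(x_n,u,u)+2G(Tx_n,u,u)$ using the rectangle inequality together with Proposition 1.1, substitute, and solve for $G(Tx_n,Tu,Tu)$ to obtain $G(Tx_n,Tu,Tu)\le \delta\,G(x_n,u,u)$ with $\delta=\frac{a+b}{1-2b}$. The hypotheses $a\ge 0$ and $a+3b<1$ give $1-2b>0$ and $a+b<1-2b$, hence $0\le\delta<1$, as in Theorem \ref{main1}. Plugging back yields $G(x_{n+1},u,u)\le[1-\alpha_n(1-\delta)]G(x_n,u,u)$, and induction gives $G(x_{n+1},u,u)\le \big(\prod_{k=0}^{n}[1-\alpha_k(1-\delta)]\big)G(x_0,u,u)$. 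Finally, since $1-\delta>0$, $\alpha_k\in[0,1]$ and $\sum_{n=0}^{\infty}\alpha_n=\infty$, the partial products tend to $0$, so $G(x_n,u,u)\to 0$ and $\{x_n\}$ converges strongly to the fixed point $u$.

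I do not expect a real obstacle here. The only thing that needs attention is the "proper place" alluded to before the statement: the $\max$ must be taken \emph{after} specializing $y=z=u$, so that the two null terms $G(u,Tu,Tu)$ are discarded and only $G(x_n,Tx_n,Tx_n)$ survives, matching the sum version. One should also note that $(\ref{9,12})$ is postulated for \emph{all} $x,y,z\in X$, so the choice $y=z$ is admissible; everything else is routine and copies Theorem \ref{main1}.
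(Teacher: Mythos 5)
Your proof is correct and is exactly the reduction the paper has in mind: the paper offers no written argument beyond the remark that the max version follows from the sum version, and your observation that taking $y=z=u$ forces $G(u,Tu,Tu)=0$, so the maximum collapses to $G(x_n,Tx_n,Tx_n)$ and reproduces inequality (\ref{5}) of Theorem \ref{main1}, after which everything is verbatim. (Equivalently, since the maximum is dominated by the sum of the three nonnegative terms, condition (\ref{9,12}) already implies condition (\ref{9,1}) with the same constants, so the previous corollary applies directly; either route is fine.)
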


We also give the following result corresponding to contractive condition. $(%
\ref{2}).$

\begin{theorem}
\label{main2} Let $(X,G,W)$ be a convex $G$-metric space with a convex
structure $W$ and let $T:X\rightarrow X$ be a mapping with a fixed point $u$
satisfying the following inequality%
\begin{equation}
G\left( Tx,Ty,Tz\right) \leq aG\left( x,Tx,Tx\right) +bG\left(
y,Ty,Ty\right) +cG\left( z,Tz,Tz\right)   \label{9,2}
\end{equation}%
for all $x,y,z\in X$ where where $a,b,c\ $are nonnegative real numbers  $%
0\leq a+b+c<1$ and $a<\frac{1}{2}.$ Let $\left\{ x_{n}\right\} $ be defined
iteratively by $(\ref{m})$ and $x_{0}\in X,$ with $\left\{ {\alpha }%
_{n}\right\} \subset \lbrack 0,1]$ satisfying $\tsum\limits_{n=0}^{\infty }{%
\alpha }_{n}=\infty .$ Then $\left\{ x_{n}\right\} $ converges strongly to a
fixed point of $T$.

\begin{proof}
From the proof of Theorem \ref{main1}, we know that 
\begin{equation}
G(x_{n+1},u,u)\leq \left( 1-{\alpha }_{n}\right)
G(x_{n},u,u)+a_{n}G(Tx_{n},Tu,Tu).  \label{9,5}
\end{equation}%
Using the property of $T$ for $G(Tx_{n},Tu,Tu)$,%
\begin{eqnarray}
G(Tx_{n},Tu,Tu) &\leq &aG\left( x_{n},Tx_{n},Tx_{n}\right) +bG\left(
u,Tu,Tu\right) +cG\left( u,Tu,Tu\right)   \label{10} \\
&\leq &a\left[ G(x_{n},u,u)+G\left( u,u,Tx_{n}\right) \right]   \notag \\
&\leq &a\left[ G(x_{n},u,u)+2G(Tx_{n},u,u)\right] .  \notag
\end{eqnarray}%
Thus, from (\ref{10}) we get%
\begin{equation}
G(Tx_{n},Tu,Tu)\leq \frac{a}{1-2a}G(x_{n},u,u).  \label{11}
\end{equation}%
From the inequalities (\ref{9,5}) and (\ref{11}), we have 
\begin{eqnarray*}
G(x_{n+1},u,u) &\leq &\left( 1-{\alpha }_{n}\right) G(x_{n},u,u)+a_{n}\frac{a%
}{1-2a}G(x_{n},u,u) \\
&=&\left[ 1-{\alpha }_{n}\left( 1-\frac{a}{1-2a}\right) \right] G(x_{n},u,u).
\end{eqnarray*}%
If we denote 
\begin{equation*}
\delta =\frac{a}{1-2a}
\end{equation*}%
then we have $0\leq \delta <1$ and 
\begin{equation*}
G(x_{n+1},u,u)\leq \left[ 1-{\alpha }_{n}\left( 1-\delta \right) \right]
G(x_{n},u,u).
\end{equation*}%
Indeed 
\begin{equation*}
a<\frac{1}{2}\Longrightarrow \frac{a}{1-2a}<1\ \text{\ and }1-2a\neq 0.
\end{equation*}%
In a way similar to the proof of above the theorem, we obtain%
\begin{equation*}
\lim_{n\rightarrow \infty }G(x_{n},u,u)=0.
\end{equation*}%
Hence the proof.
\end{proof}
\end{theorem}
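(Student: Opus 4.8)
The plan is to follow the template already established in the proof of Theorem \ref{main1}, since the contractive condition (\ref{9,2}) is structurally the special case of (\ref{1}) with $a$ replaced by $0$ and the roles of the coefficients shifted. First I would start from the Mann iteration identity and the convexity of $W$ to obtain
\begin{equation*}
G(x_{n+1},u,u)\leq (1-\alpha_n)G(x_n,u,u)+\alpha_n G(Tx_n,Tu,Tu),
\end{equation*}
using that $u$ is a fixed point so $G(Tx_n,u,u)=G(Tx_n,Tu,Tu)$. This is exactly inequality (\ref{9,5}), so it can be quoted verbatim from the previous proof rather than rederived.

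Next I would estimate $G(Tx_n,Tu,Tu)$ using (\ref{9,2}). Setting $x=x_n$, $y=z=u$ and using $Tu=u$ kills the $b$ and $c$ terms, leaving $G(Tx_n,Tu,Tu)\leq aG(x_n,Tx_n,Tx_n)$. The key move is then to bound $G(x_n,Tx_n,Tx_n)$ in terms of $G(x_n,u,u)$ and $G(Tx_n,u,u)$: by the rectangle inequality (v) of Definition 1.1, $G(x_n,Tx_n,Tx_n)\leq G(x_n,u,u)+G(u,Tx_n,Tx_n)$, and then by Proposition 1.1(iii), $G(u,Tx_n,Tx_n)\leq 2G(Tx_n,u,u)$. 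Combining gives $G(Tx_n,Tu,Tu)\leq a\,G(x_n,u,u)+2a\,G(Tx_n,u,u)$, which one solves for $G(Tx_n,Tu,Tu)$ — this is legitimate precisely because $a<\tfrac12$ ensures $1-2a>0$ — to get $G(Tx_n,Tu,Tu)\leq \tfrac{a}{1-2a}G(x_n,u,u)$, i.e. (\ref{11}).

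Finally I would substitute (\ref{11}) into (\ref{9,5}), collect terms, and set $\delta=\tfrac{a}{1-2a}$. The hypothesis $a<\tfrac12$ gives $0\leq\delta<1$, so the recursion $G(x_{n+1},u,u)\leq[1-\alpha_n(1-\delta)]G(x_n,u,u)$ iterates to $G(x_{n+1},u,u)\leq\prod_{k=0}^{n}[1-\alpha_k(1-\delta)]\,G(x_0,u,u)$. Since $\alpha_k\in[0,1]$, $1-\delta>0$, and $\sum\alpha_n=\infty$, the standard fact $\prod_{k}[1-\alpha_k(1-\delta)]\to 0$ forces $G(x_n,u,u)\to 0$, i.e. $\{x_n\}$ converges strongly to the fixed point $u$ of $T$. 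The only real point requiring care — and the one the hypothesis $a<\tfrac12$ is there to guarantee — is the invertibility step $1-2a>0$ used when solving for $G(Tx_n,Tu,Tu)$; everything else is a routine repetition of the argument in Theorem \ref{main1}, and indeed the proof as written simply refers back to it for the infinite-product limit.
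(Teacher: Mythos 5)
Your proposal is correct and follows essentially the same route as the paper's own proof: the same reduction to $G(Tx_n,Tu,Tu)\leq aG(x_n,Tx_n,Tx_n)$, the same rectangle-inequality plus Proposition~(iii) bound, the same absorption step using $1-2a>0$, and the same infinite-product conclusion. Nothing further to add.
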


\begin{corollary}
Let $(X,G,W)$ be a convex $G$-metric space with a convex structure $W$ and
let $T:X\rightarrow X$ be a mapping with a fixed point $u$ satisfying the
following inequality%
\begin{equation}
G\left( Tx,Ty,Tz\right) \leq k\left\{ G\left( x,Tx,Tx\right) +G\left(
y,Ty,Ty\right) +G\left( z,Tz,Tz\right) \right\}  \label{12}
\end{equation}%
for all $x,y,z\in X$ where $0<k<\frac{1}{3}.$ Let $\left\{ x_{n}\right\} $
be defined iteratively by $(\ref{m})$ and $x_{0}\in X,$ with $\left\{ {%
\alpha }_{n}\right\} \subset \lbrack 0,1]$ satisfying $\sum\limits_{n=0}^{%
\infty }{\alpha }_{n}=\infty .$ Then $\left\{ x_{n}\right\} $ converges
strongly to a fixed point of $T$.
\end{corollary}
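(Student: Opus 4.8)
The plan is to recognize \eqref{12} as a special case of the contractive condition \eqref{9,2} and then invoke Theorem \ref{main2}. Concretely, I would set $a=b=c=k$ in \eqref{9,2}; with this choice the right-hand side of \eqref{9,2} becomes
\begin{equation*}
k\,G\left( x,Tx,Tx\right) +k\,G\left( y,Ty,Ty\right) +k\,G\left( z,Tz,Tz\right) ,
\end{equation*}
which is exactly the right-hand side of \eqref{12}. Hence any $T$ satisfying \eqref{12} satisfies \eqref{9,2} with these values of $a,b,c$.

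Next I would check that the hypotheses on the constants in Theorem \ref{main2} hold. Since $0<k<\frac13$, we have $a+b+c=3k<1$, so the condition $0\leq a+b+c<1$ is satisfied; and $a=k<\frac13<\frac12$, so the additional requirement $a<\frac12$ is satisfied as well. The remaining hypotheses — that $(X,G,W)$ is a convex $G$-metric space, that $u$ is a fixed point of $T$, that $\{x_n\}$ is generated by \eqref{m} from some $x_0\in X$, and that $\{\alpha_n\}\subset[0,1]$ with $\sum_{n=0}^{\infty}\alpha_n=\infty$ — are identical to those assumed here.

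Therefore Theorem \ref{main2} applies verbatim and yields that $\{x_n\}$ converges strongly to a fixed point of $T$, which completes the argument. I do not anticipate any genuine obstacle: the only point that needs a moment's care is verifying the two strict numerical inequalities ($3k<1$ and $k<\frac12$), both of which are immediate from $0<k<\frac13$. In particular no new estimate on $G$ needs to be established, since the entire analytic content — reducing $G(Tx_n,Tu,Tu)$ to a contraction factor times $G(x_n,u,u)$ via parts (iii) and the rectangle inequality of the $G$-metric, and then driving $G(x_n,u,u)\to 0$ using $\lim_{n\to\infty}\prod_{k=0}^{n}\left[1-\alpha_k(1-\delta)\right]=0$ — is already carried out in the proof of Theorem \ref{main2}.
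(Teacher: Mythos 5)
Your proposal is correct and is exactly the paper's proof: the paper simply takes $a=b=c=k$ in condition (\ref{9,2}) and applies Theorem \ref{main2}. Your additional verification that $3k<1$ and $k<\tfrac12$ follow from $0<k<\tfrac13$ is a welcome (if routine) bit of extra care.
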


\begin{proof}
Take $a=b=c=k$ in condition (\ref{9,2}) of Theorem \ref{main2}.
\end{proof}

\end{document}